\documentclass[reqno,draft]{amsproc}
\usepackage{amssymb,tikz}
\usepackage{graphicx}
\usepackage{euscript}
\usepackage{mathrsfs} %\mathscr T
\usepackage{units}   %\nicefrac %kolejarski ulamek
\usepackage{color}
%\usepackage{showkeys}
%\usepackage{refcheck}

%\renewcommand\baselinestretch{1.67}
%\doublespacing

\makeatletter
\@namedef{subjclassname@2010}{%
\textup{2010} Mathematics Subject Classification}
\makeatother

\numberwithin{equation}{section}

\newtheorem{thm}{Theorem}[section]
\newtheorem{cor}[thm]{Corollary}
\newtheorem{lem}[thm]{Lemma}

\newtheorem*{thm*}{Theorem}

\theoremstyle{remark}
\newtheorem{rem}[thm]{Remark}

\theoremstyle{definition}
\newtheorem{exa}[thm]{Example}
\newtheorem*{dfn*}{Definition}

\DeclareMathOperator{\D}{d}

\DeclareMathOperator{\dzii}{{\mathsf{Chi}}}

\DeclareMathOperator{\koo}{{\mathsf{root}}}

\DeclareMathOperator{\paa}{{\mathsf{par}}}

\newcommand*{\ascr}{\mathscr A}
\newcommand*{\borel}[1]{{\mathfrak B}(#1)}
\newcommand*{\cbb}{\mathbb C}

\newcommand*{\cfw}{{\mathsf C}}

\newcommand*{\dz}[1]{{\EuScript D}(#1)}
\newcommand*{\dzi}[1]{\dzii(#1)}

\newcommand*{\dzn}[1]{{\EuScript D}^\infty(#1)}

\newcommand*{\esf}{\mathsf{E}}

\newcommand*{\Ge}{\geqslant}
\newcommand*{\hh}{\mathcal H}
\newcommand*{\hsf}{{\mathsf h}}

\newcommand*{\kk}{\mathcal K}

\newcommand*{\lambdab}{{\boldsymbol\lambda}}

\newcommand*{\Le}{\leqslant}

\newcommand*{\nbb}{\mathbb N}
\newcommand*{\ogr}[1]{\boldsymbol B(#1)}

\newcommand*{\pa}[1]{\paa(#1)}

\newcommand*{\rbb}{\mathbb R}
\newcommand*{\rbop}{{\overline{\mathbb R}_+}}
\newcommand*{\slam}{S_{\boldsymbol \lambda}}
\newcommand*{\smalloplus}{\raise0pt\hbox{$\scriptscriptstyle \oplus$}}

\newcommand*{\tcal}{{\mathscr T}}

\newcommand*{\zbb}{\mathbb Z}

   \begin{document}
   \title[Subnormal
operators with non-densely defined
powers]{Subnormal weighted shifts on directed
trees and composition operators in $L^2$ spaces
with non-densely defined powers}
   \author[P.\ Budzy\'{n}ski]{Piotr Budzy\'{n}ski}
   \address{Katedra Zastosowa\'{n} Matematyki,
Uniwersytet Rolniczy w Krakowie, ul.\ Balicka 253c,
PL-30198 Krak\'ow, Poland}
\email{piotr.budzynski@ur.krakow.pl}
   \author[P.\ Dymek]{Piotr Dymek}
   \address{Instytut Matematyki,
Uniwersytet Jagiello\'nski, ul.\ \L ojasiewicza 6,
PL-30348 Kra\-k\'ow, Poland}
\email{Piotr.Dymek@im.uj.edu.pl}
   \author[Z.\ J.\ Jab{\l}o\'nski]{Zenon Jan
Jab{\l}o\'nski}
   \address{Instytut Matematyki,
Uniwersytet Jagiello\'nski, ul.\ \L ojasiewicza 6,
PL-30348 Kra\-k\'ow, Poland}
\email{Zenon.Jablonski@im.uj.edu.pl}
   \author[J.\ Stochel]{Jan Stochel}
\address{Instytut Matematyki, Uniwersytet
Jagiello\'nski, ul.\ \L ojasiewicza 6, PL-30348
Kra\-k\'ow, Poland} \email{Jan.Stochel@im.uj.edu.pl}
   \thanks{The research of the first author was
supported by the NCN (National Science Center)
grant DEC-2011/01/D/ST1/05805. The research of
the third and the fourth author was supported by
the MNiSzW (Ministry of Science and Higher
Education) grant NN201 546438 (2010-2013).}
   \subjclass[2010]{Primary 47B20, 47B37;
Secondary 47B33} \keywords{(Weighted) composition operator in an $L^2$
space, weighted shift on a
directed tree, subnormal operator, non-densely defined power}
   \begin{abstract}
It is shown that for every positive integer $n$
there exists a subnormal weighted shift on a
directed tree (with or without root) whose $n$th
power is densely defined while its $(n+1)$th
power is not. As a consequence, for every
positive integer $n$ there exists a non-symmetric
subnormal composition operator $C$ in an $L^2$
space over a $\sigma$-finite measure space such
that $C^n$ is densely defined and $C^{n+1}$ is
not.
   \end{abstract}
   \maketitle
   \section{Introduction}
The question of when powers of a closed densely
defined linear operator are densely defined has
attracted considerable attention. In 1940 Naimark
gave a surprising example of a closed symmetric
operator whose square has trivial domain (see
\cite{nai}; see also \cite{cher} for a different
construction). More than four decades later,
Schm\"{u}dgen discovered another pathological
behaviour of domains of powers of symmetric
operators (cf.\ \cite{Schm}). It is well-known
that symmetric operators are subnormal (cf.\
\cite[Theorem 1 in Appendix I.2]{a-g}). Hence,
closed subnormal operators may have non-densely
defined powers. In turn, quasinormal operators,
which are subnormal as well (see \cite{Br} and
\cite{StSz1}), have all powers densely defined
(cf.\ \cite{StSz1}). In the present paper we
discuss the above question in the context of
subnormal weight\-ed shifts on directed trees and
subnormal composition operators in $L^2$ spaces
(over $\sigma$-finite measure spaces).

As recently shown (cf.\ \cite[Proposition
3.1]{j-j-s3}), formally normal, and consequently
symmetric, weighted shifts on directed trees are
automatically bounded and normal (in general,
formally normal operators are not subnormal, cf.\
\cite{b-c}). The same applies to symmetric
composition operators in $L^2$ spaces (cf.\
\cite[Proposition B.1]{b-j-j-sS}). Formally
normal composition operators in $L^2$ spaces,
which may be unbounded (see \cite[Appendix
C]{b-j-j-sS}), are still normal (cf.\
\cite[Theorem 9.4]{b-j-j-sC}). As a consequence,
all powers of such operators are densely defined
(see e.g., \cite[Corollary 5.28]{Schm2}).

The above discussion suggests the question of
whether for every positive integer $n$ there
exists a subnormal weighted shift on a directed
tree whose $n$th power is densely defined while
its $(n+1)$th power is not. A similar question
can be asked for composition operators in $L^2$
spaces. To answer both of them, we proceed as
follows. First, by applying a recently
established criterion for subnormality of
weighted composition operators in $L^2$ spaces
which makes no appeal to density of
$C^\infty$-vectors (see Theorem \ref{general}),
we show that a densely defined weighted shift on
a directed tree which admits a consistent system
of probability measures\footnote{\;i.e., a system
$\{\mu_v\}_{v \in V}$ of Borel probability
measures on $\rbb_+$ which satisfies
\eqref{consist6}.} is subnormal, and, what is
more, its $n$th power is densely defined if and
only if all moments of these measures up to
degree $n$ are finite (cf.\ Theorem \ref{wsi}).
The particular case of directed trees with one
branching vertex is examined in Theorem
\ref{glowne} and Corollary \ref{wndd}. Using
these two results, we answer both questions in
the affirmative (see Example \ref{glownyprz} and
Remark \ref{lebaa}). It is worth pointing out
that though directed trees with one branching
vertex have simple structure, they provide many
examples which are important in operator theory
(see e.g., \cite{j-j-s,j-j-s2}).

   Now we introduce some notation and
terminology. In what follows, $\zbb$, $\zbb_+$,
$\nbb$, $\rbb_+$ and $\cbb$ stand for the sets of
integers, nonnegative integers, positive
integers, nonnegative real numbers and complex
numbers, respectively. Set $\rbop=\rbb_+ \cup
\{\infty\}$. We write $\borel{\rbb_+}$ for the
$\sigma$-algebra of all Borel subsets of
$\rbb_+$. Given $t\in \rbb_+$, we denote by
$\delta_t$ the Borel probability measure on
$\rbb_+$ concentrated on $\{t\}$.

The domain of an operator $A$ in a complex
Hilbert space $\hh$ is denoted by $\dz{A}$ (all
operators considered in this paper are linear).
Set $\dzn{A} = \bigcap_{n=0}^\infty\dz{A^n}$.
Recall that a closed densely defined operator $A$
in $\hh$ is said to be {\em normal} if $AA^* =
A^*A$ (see \cite{b-s,Schm2,Weid} for more on this
class of operators). We say that a densely
defined operator $A$ in $\hh$ is {\em subnormal}
if there exists a complex Hilbert space $\kk$ and
a normal operator $N$ in $\kk$ such that $\hh
\subseteq \kk$ (isometric embedding) and $Ah =
Nh$ for all $h \in \dz{S}$. We refer the reader
to \cite{Con} and \cite{StSz3,StSz1,StSz4,StSz2}
for the foundations of the theory of bounded and
unbounded subnormal operators, respectively.
   \section{Weighted composition operators}
Assume that $(X,\ascr,\nu)$ is a $\sigma$-finite
measure space, $w\colon X \to \cbb$ is an
$\ascr$-measurable function and $\phi\colon X \to
X$ is an $\ascr$-measurable mapping. Define the
$\sigma$-finite measure $\nu_w\colon \ascr \to
\rbop$ by $\nu_w(\varDelta) = \int_{\varDelta}
|w|^2 \D\nu$ for $\varDelta \in \ascr$. Let
$\nu_w \circ \phi^{-1}\colon \ascr \to \rbop$ be the
measure given by $\nu_w \circ
\phi^{-1}(\varDelta)=\nu_w(\phi^{-1}(\varDelta))$
for $\varDelta \in \ascr$. Assume that $\nu_w
\circ \phi^{-1}$ is absolutely continuous with
respect to $\nu$. By the Radon-Nikodym theorem
(cf.\ \cite[Theorem 2.2.1]{Ash}), there exists a
unique (up to a.e.\ $[\nu]$ equivalence)
$\ascr$-measurable function $\hsf={\mathsf
h}_{\phi,w}\colon X \to \rbop$ such that
   \begin{align*}
\nu_w \circ \phi^{-1}(\varDelta) =
\int_{\varDelta} \hsf \D \nu, \quad \varDelta \in
\ascr.
   \end{align*}
Then the operator $\cfw = {\mathsf C}_{\phi,w}$
in $L^2(\nu)$ given by
   \begin{align}    \label{wco-def}
   \begin{aligned}
   \dz{\cfw} & = \{f \in L^2(\nu) \colon w \cdot
(f\circ \phi) \in L^2(\nu)\},
   \\
\cfw f & = w \cdot (f\circ \phi), \quad f \in
\dz{\cfw},
   \end{aligned}
   \end{align}
is well-defined (cf.\ \cite[Proposition
6]{b-j-j-sW}). Call $\cfw$ a {\em weighted
composition operator}. By \cite[Proposition
9]{b-j-j-sW}, $\cfw$ is densely defined if and
only if $\hsf < \infty$ a.e.\ $[\nu]$; moreover,
if this is the case, then
$\nu_w|_{\phi^{-1}(\ascr)}$ is $\sigma$-finite
and, by the Radon-Nikodym theorem, for every
$\ascr$-measurable function $f \colon X \to
\rbop$ there exists a unique (up to a.e.\
$[\nu_w]$ equivalence)
$\phi^{-1}(\ascr)$-measurable function $\esf(f) =
\mathsf{E}_{\phi,w}(f)\colon X \to \rbop$ such
that
   \begin{align*}
\int_{\phi^{-1}(\varDelta)} f \D \nu_w =
\int_{\phi^{-1}(\varDelta)} \esf(f) \D \nu_w,
\quad \varDelta \in \ascr.
   \end{align*}
We call $\esf(f)$ the {\em conditional
expectation} of $f$ with respect to
$\phi^{-1}(\ascr)$ (see \cite{b-j-j-sW} for more
information). A mapping $P\colon X \times
\borel{\rbb_+} \to [0,1]$ is called an {\em
$\ascr$-measurable family of probability
measures} if the set-function $P(x,\cdot)$ is a
probability measure for every $x \in X$ and the
function $P(\cdot,\sigma)$ is $\ascr$-measurable
for every $\sigma \in \borel{\rbb_+}$.

The following criterion (read:\ a sufficient
condition) for subnormality of unbounded weighted
composition operators is extracted from
\cite[Theorem 27]{b-j-j-sW}.
   \begin{thm} \label{general}
If $\cfw$ is densely defined, $\hsf > 0$ a.e.\
$[\nu_w]$ and there exists an $\ascr$-measurable
family of probability measures $P\colon X \times
\borel{\rbb_+} \to [0,1]$ such that
   \begin{align} \tag{CC} \label{cc}
\esf(P(\cdot, \sigma)) (x) = \frac{\int_{\sigma}
t P(\phi(x),\D t)}{\hsf(\phi(x))} \; \text{ for
$\nu_w$-a.e.\ $x \in X$}, \quad \sigma \in
\borel{\rbb_+},
   \end{align}
then $\cfw$ is subnormal.
   \end{thm}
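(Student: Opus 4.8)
The plan is to obtain the statement as a direct specialization of \cite[Theorem 27]{b-j-j-sW}, so that the real work is not to build a normal extension by hand but to check that the three standing hypotheses here furnish exactly the data that theorem consumes. Accordingly, the first step is to pin down the precise shape of the criterion in \cite{b-j-j-sW}: it attaches to the weighted composition operator an $\ascr$-measurable family of probability measures on $\rbb_+$ and asserts subnormality once a consistency condition, written through the conditional expectation $\esf = \mathsf{E}_{\phi,w}$ and the Radon--Nikodym derivative $\hsf$, is satisfied. Crucially, I would confirm that this criterion is of the type that makes no appeal to density of the $C^\infty$-vectors $\dzn{\cfw}$; this is exactly the feature that will later let us apply it to operators whose high powers are not even densely defined, and it is the reason a moment-sequence argument in the style of Stochel--Szafraniec cannot be substituted here.

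The substantive step is to verify that the single identity \eqref{cc} is precisely (or at least implies) the consistency condition required in \cite[Theorem 27]{b-j-j-sW}. Here the auxiliary hypotheses do the enabling work: dense definedness of $\cfw$ is equivalent to $\hsf < \infty$ a.e.\ $[\nu]$, so together with $\hsf > 0$ a.e.\ $[\nu_w]$ the quotient on the right-hand side of \eqref{cc} is a well-defined, finite, measurable function $\nu_w$-a.e., and the measurability clause in the definition of an $\ascr$-measurable family of probability measures guarantees that both sides of \eqref{cc} are $\ascr$-measurable in $x$. I would then test \eqref{cc} against sets of the form $\phi^{-1}(\varDelta)$ with $\varDelta \in \ascr$, integrate with respect to $\nu_w$, and use the defining relation of $\esf$ to pass from the pointwise identity to the integral formulation in which \cite{b-j-j-sW} phrases its hypothesis; this matching, performed for each Borel set $\sigma$, is what identifies $\{P(x,\cdot)\}$ as an admissible family for Theorem 27. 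Once the hypotheses are aligned, subnormality of $\cfw$ is immediate by citation.

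The main obstacle I anticipate is measure-theoretic bookkeeping rather than any new structural idea. One has to keep scrupulous track of whether each ``a.e.'' is taken with respect to $\nu$ or to $\nu_w$, to use the $\phi^{-1}(\ascr)$-measurability of $\esf(P(\cdot,\sigma))$ exactly where the argument needs the conditional expectation to factor through the dynamics of $\phi$, and to confirm that the normalization by $\hsf(\phi(x))$ turns the family $\{P(x,\cdot)\}$ into the measures that parametrize the minimal normal extension produced in \cite{b-j-j-sW}. If instead one wanted a self-contained proof, the route would be to assemble these local measures into a single measure on the product space $X \times \rbb_+$ (weighting $P(x,\cdot)$ by $\D\nu_w$) and to realize the sought normal operator as a weighted composition operator on the associated $L^2$ space, with \eqref{cc} guaranteeing that $\cfw$ embeds as a restriction of that operator; I expect the delicate point there to be verifying closedness and normality of the extension without invoking $C^\infty$-vectors, which is precisely the technical heart that \cite[Theorem 27]{b-j-j-sW} packages for us.
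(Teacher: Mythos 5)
Your proposal matches the paper exactly: the paper gives no independent proof of Theorem \ref{general}, stating only that it is ``extracted from'' \cite[Theorem 27]{b-j-j-sW}, which is precisely the citation-and-hypothesis-matching route you take. Your additional remarks on aligning the a.e.\ qualifiers and the measurability of $P$ are sensible bookkeeping, but they do not constitute a different approach from the paper's.
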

   Regarding Theorem \ref{general}, recall that
if $\cfw$ is subnormal, then $\hsf
> 0$ a.e.\ $[\nu_w]$ (cf.\ \cite[Corollary 12]{b-j-j-sW}).
   \section{Weighted shifts on directed trees}
Let $\tcal=(V,E)$ be a directed tree ($V$ and $E$
stand for the sets of vertices and edges of
$\tcal$, respectively). Set $\dzi u = \{v\in
V\colon (u,v)\in E\}$ for $u \in V$. Denote by
$\paa$ the partial function from $V$ to $V$ which
assigns to each vertex $u\in V$ its parent
$\pa{u}$ (i.e.\ a unique $v \in V$ such that
$(v,u)\in E$). A vertex $u \in V$ is called a
{\em root} of $\tcal$ if $u$ has no parent. A
root is unique (provided it exists); we denote it
by $\koo$. Set $V^\circ=V \setminus \{\koo\}$ if
$\tcal$ has a root and $V^\circ=V$ otherwise. We say that $u \in V$ is a {\em branching vertex} of $V$, and write $u \in V_{\prec}$, if $\dzi{u}$ consists of at least two vertices.  We refer the reader to
\cite{j-j-s} for all facts about directed trees
needed in this paper.

By a {\em weighted shift on} $\tcal$ with weights
$\lambdab=\{\lambda_v\}_{v \in V^{\circ}} \subseteq
\cbb$ we mean the operator $\slam$ in $\ell^2(V)$
defined by
   \begin{align*} \begin{aligned}
\dz {\slam} & = \{f \in \ell^2(V) \colon
\varLambda_\tcal f \in \ell^2(V)\},
   \\
\slam f & = \varLambda_\tcal f, \quad f \in
\dz{\slam},
\end{aligned}
\end{align*}
where $\varLambda_\tcal$ is the mapping defined
on functions $f\colon V \to \cbb$ via
   \begin{align*}
(\varLambda_\tcal f) (v) =
   \begin{cases}
\lambda_v \cdot f\big(\pa v\big) & \text{ if }
v\in V^\circ,
   \\
0 & \text{ if } v=\koo.
   \end{cases}
   \end{align*}
(As usual, $\ell^2(V)$ is the Hilbert space of
square summable complex functions on $V$ with
standard inner product.) For $u \in V$, we define
$e_u \in \ell^2(V)$ to be the characteristic
function of the one-point set $\{u\}$. Then
$\{e_u\}_{u\in V}$ is an orthonormal basis of
$\ell^2(V)$.

   The following useful lemma is an extension of
part (iv) of \cite[Theorem 3.2.2]{j-j-s2}.
   \begin{lem} \label{ddn}
Let $\slam$ be a weighted shift on a directed tree
$\tcal=(V,E)$ with weights $\lambdab=\{\lambda_v\}_{v
\in V^{\circ}}$ and let $n\in \zbb_+$. Then $\slam^n$
is densely defined if and only if $e_u\in
\dz{\slam^n}$ for every $u\in V_{\prec}$.
   \end{lem}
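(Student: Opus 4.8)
The plan is to reduce everything to the description of the domains of the powers $\slam^n$ from \cite{j-j-s2}, and then to isolate the role of branching vertices by a generation-by-generation descent in the tree. For $u\in V$ and $k\in\zbb_+$ I would write
\[
\|\slam^k e_u\|^2 \;=\; \sum_{w\in\dzin{k}{u}}\;\prod_{i=0}^{k-1}\bigl|\lambda_{\pan{i}{w}}\bigr|^2 \;\in\; \rbop ,
\]
with the convention $\pan{0}{w}=w$. The key structural point is that the vectors $\slam^k e_v$, $v\in V$, have pairwise disjoint supports, since a vertex has a unique ancestor in each generation; hence for every $f\in\ell^2(V)$ one has $\|\slam^k f\|^2=\sum_{v\in V}|f(v)|^2\,\|\slam^k e_v\|^2$. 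By \cite[Theorem 3.2.2]{j-j-s2} this yields both the characterisation that $f\in\dz{\slam^n}$ if and only if $\sum_{v}|f(v)|^2\|\slam^k e_v\|^2<\infty$ for every $k\in\{1,\dots,n\}$, and, specialising to $f=e_u$, that $e_u\in\dz{\slam^n}$ if and only if $\|\slam^k e_u\|^2<\infty$ for $k=1,\dots,n$. I would also record the one-step recursion $\|\slam^k e_u\|^2=\sum_{v\in\dzi u}|\lambda_v|^2\,\|\slam^{k-1}e_v\|^2$, obtained by grouping the generation-$k$ descendants of $u$ according to their parent in $\dzi u$.

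For the ``only if'' part I would argue by contradiction. Suppose $\slam^n$ is densely defined but $e_u\notin\dz{\slam^n}$ for some $u$ (which need not be a branching vertex). Then $\|\slam^{k_0}e_u\|^2=\infty$ for some $k_0\le n$, and the displayed identity forces $f(u)=0$ for every $f\in\dz{\slam^n}$, since the term $|f(u)|^2\|\slam^{k_0}e_u\|^2$ occurs in a convergent sum. Thus $\dz{\slam^n}\subseteq\{e_u\}^\perp$, a proper closed subspace, contradicting density. Hence $e_u\in\dz{\slam^n}$ for all $u\in V$, in particular for all $u\in V_{\prec}$.

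For the ``if'' part, assuming $e_u\in\dz{\slam^n}$ for every branching vertex $u$, I would prove by induction on $k\in\{0,1,\dots,n\}$ that $\|\slam^k e_u\|^2<\infty$ for all $u\in V$ simultaneously. The case $k=0$ is trivial. For the inductive step fix $u$: if $u\in V_{\prec}$ then $\|\slam^k e_u\|^2<\infty$ follows directly from $e_u\in\dz{\slam^n}$ and $k\le n$; if $u$ is not branching then $\dzi u$ is either empty, giving $\|\slam^k e_u\|^2=0$, or a singleton $\{v\}$, in which case the recursion gives $\|\slam^k e_u\|^2=|\lambda_v|^2\|\slam^{k-1}e_v\|^2<\infty$ by the inductive hypothesis applied to $v$. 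Consequently every $e_u$ lies in $\dz{\slam^n}$, so $\dz{\slam^n}$ contains the dense subspace $\lin\{e_u:u\in V\}$ and is therefore dense.

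The genuinely delicate point is the passage from the closed operator $\slam$ to its (possibly non-closed) power $\slam^n$, that is, the validity of $\|\slam^k f\|^2=\sum_v|f(v)|^2\|\slam^k e_v\|^2$ together with the above domain description; this is exactly what \cite[Theorem 3.2.2]{j-j-s2} supplies, so I would lean on that result rather than re-deriving it. Once that description is available, both implications are short: necessity is the orthogonality observation forcing $f(u)=0$, and sufficiency is the descent that transfers finiteness from the branching vertices down the intervening non-branching chains.
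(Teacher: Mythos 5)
Your proposal is correct and takes essentially the same approach as the paper: both rest on \cite[Theorem 3.2.2]{j-j-s2} for the structure of $\dz{\slam^n}$ and both transfer domain membership across non-branching vertices by a descent through one-child chains (leaves being trivial). The only differences are organizational --- the paper quotes part (iv) of that theorem to reduce density to the basis vectors $e_u$, $u\in V$, and then inducts along paths, whereas you re-derive that reduction from the disjoint-support norm formula and run the induction on the exponent $k$ simultaneously over all vertices.
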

   \begin{proof}
In view of \cite[Theorem 3.2.2(iv)]{j-j-s2},
$\slam^n$ is densely defined if and only if $e_u
\in \dz{\slam^n}$ for every $u\in V$. Note that
if $u \in V$ and $\dzi{u} = \{v\}$, then $e_u \in
\dz{\slam}$ and $\slam e_u = \lambda_v e_v$,
which implies that $e_u \in \dz{\slam^{n+1}}$
whenever $e_v \in \dz{\slam^n}$. In turn, if
$\dzi{u}=\varnothing$, then clearly $e_u \in
\dzn{\slam}$. Using the above and an induction
argument (related to paths in $\tcal$), we deduce
that $\slam^n$ is densely defined if and only if
$e_u \in \dz{\slam^n}$ for every $u\in
V_{\prec}$.
   \end{proof}
It is worth mentioning that if $V_{\prec} =
\varnothing$, then, by Lemma \ref{ddn} and
\cite[Theorem 3.2.2(iv)]{j-j-s2} (or by the proof
of Lemma \ref{ddn}), $\dzn{\slam}$ is dense in
$\ell^2(V)$. In particular, this covers the case
of classical weighted shifts and their adjoints.

Now we give a criterion for subnormality of
weighted shifts on directed trees. As opposed to
\cite[Theorem 5.1.1]{b-j-j-sA}, we do not assume
the density of $C^\infty$-vectors in the
underlying $\ell^2$-space. Moreover, we do not
assume that the underlying directed tree is
rootless and leafless, which is required in
\cite[Theorem 47]{b-j-j-sS}, and that weights are
nonzero. The only restriction we impose is that
the directed tree is countably infinite. This is
always satisfied if the weighted shift in
question is densely defined and has nonzero
weights (cf.\ \cite[Proposition 3.1.10]{j-j-s}).
   \begin{thm} \label{wsi}
Let $\slam$ be a weighted shift on a
countably infinite directed tree $\tcal=(V,E)$ with
weights $\lambdab=\{\lambda_v\}_{v \in V^{\circ}}$.
Suppose there exist a system $\{\mu_v\}_{v \in V}$ of
Borel probability measures on $\rbb_+$ and a system
$\{\varepsilon_v\}_{v\in V}$ of nonnegative real
numbers such that\/\footnote{\;We adopt the conventions
that $0\cdot \infty = \infty \cdot 0 = 0$,
$\frac{1}{0} = \infty$ and $\sum_{v\in \varnothing}
\xi_v = 0$.}
   \begin{align} \label{consist6}
\mu_u(\sigma) = \sum_{v \in \dzi{u}}
|\lambda_v|^2 \int_\sigma \frac{1}{t} \mu_v(\D t)
+ \varepsilon_u \delta_0(\sigma), \quad \sigma
\in \borel{\rbb_+}, \, u \in V.
   \end{align}
Then the following two assertions hold\/{\em :}
\begin{enumerate}
\item[(i)] if $\slam$ is densely defined, then
$\slam$ is subnormal,
\item[(ii)] if $n\in \nbb$, then $\slam^n$
is densely defined if and only
if\/\footnote{\;Here, and later,
$\int_0^{\infty}$ means integration over the set
$\rbb_+$.} $\int_0^\infty s^n \, \D \mu_u(s) <
\infty$ for all $u \in V_{\prec}$.
\end{enumerate}
   \end{thm}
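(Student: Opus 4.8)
The plan is to realise $\slam$ as a weighted composition operator over the counting-measure space and then read (i) off Theorem \ref{general}, while (ii) will come from turning \eqref{consist6} into a recursion for the moments of the $\mu_v$. First I would set $X=V$, $\ascr=2^V$ and let $\nu$ be the counting measure, so $L^2(\nu)=\ell^2(V)$; put $\phi=\paa$ on $V^\circ$ (extended by $\phi(\koo)=\koo$ when $\tcal$ has a root) and $w=\lambdab$ on $V^\circ$ with $w(\koo)=0$. Then $(\cfw f)(v)=w(v)f(\phi(v))$ reproduces $\slam$ on the common domain, so $\slam=\cfw$. Since $\phi^{-1}(\{u\})=\dzi u$ (up to the single point $\koo$ when $u=\koo$, which carries no $\nu_w$-mass), the Radon--Nikodym derivative is $\hsf(u)=\nu_w(\phi^{-1}(\{u\}))=\sum_{v\in\dzi u}|\lambda_v|^2\in\rbop$, and, the conditional expectation being the $\nu_w$-average over fibres, $\esf(f)(x)=\hsf(\phi(x))^{-1}\sum_{v\in\dzi{\phi(x)}}|\lambda_v|^2 f(v)$ whenever $\hsf(\phi(x))\in(0,\infty)$.

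The crux of (i) is to check the hypotheses of Theorem \ref{general}. Testing \eqref{consist6} on $\sigma=\{0\}$ and using the conventions $\tfrac10=\infty$, $0\cdot\infty=0$, I note that $\mu_v(\{0\})>0$ with $\lambda_v\neq0$ would make the $v$-term infinite, forcing $\mu_{\pa v}(\{0\})=\infty$ and contradicting $\mu_{\pa v}(\rbb_+)=1$; hence $\mu_v(\{0\})=0$ whenever $\lambda_v\neq0$. The same reading of \eqref{consist6} shows that a vertex $x$ with $\lambda_x\neq0$ and $\hsf(x)=0$ (a leaf, or one all of whose children carry zero weight) would satisfy $\mu_x=\delta_0$, again contradicting $\mu_x(\{0\})=0$; thus $\hsf>0$ a.e.\ $[\nu_w]$. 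With $\slam$ densely defined, $\hsf<\infty$ throughout $V$ by \cite[Proposition 9]{b-j-j-sW}. Taking $P(v,\cdot)=\mu_v$ (automatically an $\ascr$-measurable family of probability measures, as $V$ is countable), the left side of \eqref{cc} equals $\hsf(\phi(x))^{-1}\sum_{v\in\dzi{\phi(x)}}|\lambda_v|^2\mu_v(\sigma)$ and the right side equals $\hsf(\phi(x))^{-1}\int_\sigma t\,\mu_{\phi(x)}(\D t)$. Integrating $t$ over $\sigma$ on both sides of \eqref{consist6} (with $u=\phi(x)$), and using $\int_\sigma t\cdot\tfrac1t\,\mu_v(\D t)=\mu_v(\sigma\setminus\{0\})$, the vanishing $\mu_v(\{0\})=0$ for $\lambda_v\neq0$, and $\int_\sigma t\,\delta_0(\D t)=0$, the two numerators coincide. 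Hence \eqref{cc} holds for $\nu_w$-a.e.\ $x$ and Theorem \ref{general} gives subnormality.

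For (ii) I would set $m_k(u)=\int_0^\infty s^k\,\D\mu_u(s)\in\rbop$ and integrate $s^n$ over both sides of \eqref{consist6}; the $\delta_0$-term drops for $n\Ge1$ (as $0^n=0$) and the same $\tfrac1t$-computation yields the recursion $m_n(u)=\sum_{v\in\dzi u}|\lambda_v|^2\,m_{n-1}(v)$, $m_0\equiv1$. Unwinding it gives $m_k(u)=\sum_{w\in\dzin k u}|\lambda_{w|u}|^2$, where $\lambda_{w|u}$ is the product of the weights along the path from $u$ to $w$; this is the standard expression for $\|\slam^k e_u\|^2$ (cf.\ \cite[Theorem 3.2.2]{j-j-s2}). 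Since each $\mu_u$ is a probability measure, Lyapunov's inequality gives $m_n(u)<\infty\Rightarrow m_k(u)<\infty$ for all $k\Le n$; combined with the characterisation of $\dz{\slam^n}$ by finiteness of these descendant-sums for every order up to $n$, this yields $e_u\in\dz{\slam^n}\iff m_n(u)<\infty$, whereupon Lemma \ref{ddn} gives (ii). The genuine difficulty, I expect, lies not in either reduction but in the bookkeeping at the atom $0$: matching the $\tfrac1t$-integrals of \eqref{consist6} both to the conditional-expectation side of \eqref{cc} and to the moment recursion rests entirely on the forced vanishing $\mu_v(\{0\})=0$ for $\lambda_v\neq0$.
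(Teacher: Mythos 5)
Your proposal is correct and takes essentially the same route as the paper: part (i) is the identical realization of $\slam$ as the weighted composition operator $\cfw$ over the counting measure on $V$, with the same atom-at-zero analysis (testing \eqref{consist6} on $\sigma=\{0\}$ to force $\mu_v(\{0\})=0$ when $\lambda_v\neq 0$, and $\hsf>0$ a.e.\ $[\nu_w]$) leading to \eqref{cc} and an appeal to Theorem \ref{general}. For part (ii) you derive by hand the moment recursion and its unwinding into the descendant sums $\|\slam^k e_u\|^2$, which the paper instead outsources to \cite[Lemmata 2.3.1(i) and 4.2.2(i)]{b-j-j-sA}, but the underlying argument (monotonicity of moments of probability measures combined with Lemma \ref{ddn}) is the same.
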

   \begin{proof}
(i) Assume that $\slam$ is densely defined. Set $X=V$ and $\ascr = 2^V$. Let $\nu\colon \ascr
\to \rbop$ be the counting measure on $X$ ($\nu$
is $\sigma$-finite because $V$ is countable).
Define the weight function $w\colon X \to \cbb$
and the mapping  $\phi\colon X\to X$ by
   \begin{align*}
w(x) =
   \begin{cases}
\lambda_x & \text{ if } x\in V^\circ
   \\
0 & \text{ if } x=\koo
   \end{cases}
\quad \text{and} \quad \phi (x) =
   \begin{cases}
\pa x& \text{ if } x\in V^\circ
   \\
\koo & \text{ if } x=\koo.
   \end{cases}
   \end{align*}
Clearly, the measure $\nu_w \circ \phi^{-1}$ is
absolutely continuous with respect to $\nu$ and
\begin{align} \label{leba2}
\hsf(x)=\nu_w(\phi^{-1}(\{x\})) =\nu_w(\dzi{x}) = \sum_{y\in \dzi{x}}
|\lambda_y|^2, \quad x\in X.
\end{align}
Thus, by
\cite[Proposition 3.1.3]{j-j-s}, $\hsf(x) <
\infty$ for every $x\in X$. We claim that
$\hsf > 0$ a.e.\ $[\nu_w]$. This is the same as
to show that if $x\in V^\circ$ and
$\nu_w(\dzi{x}) = 0$, then $\lambda_x = 0$. Thus,
if $x\in V^\circ$ and $\nu_w(\dzi{x}) = 0$, then applying
\eqref{consist6} to $u=x$, we deduce that $\mu_x
= \delta_0$; in turn, applying \eqref{consist6}
to $u=\pa{x}$ with $\sigma=\{0\}$, we get
$\lambda_x=0$, which proves our claim.

Note that $X = \bigsqcup_{x\in X}
\phi^{-1}(\{x\})$ (the disjoint union). Hence,
the conditional expectation $\esf(f)$ of a
function $f \colon X \to \rbop$ with respect to
$\phi^{-1}(\ascr)$ is given by
   \begin{align} \label{leba}
\esf(f)(z) = \frac{\int_{\dzi{x}} f \D
\nu_w}{\hsf(x)}, \quad z \in \phi^{-1}(\{x\}), \,
x \in X_+,
   \end{align}
where $X_+ := \{x \in X\colon \nu_w(\dzi{x})
> 0\}$ (see also \eqref{leba2}); on the remaining part of $X$ we can put
$\esf(f) = 0$.

Substituting $\sigma=\{0\}$ into
\eqref{consist6}, we see that $\mu_y(\{0\})=0$
for every $y\in V^\circ$ such that $\lambda_y\neq
0$. Thus, using the standard measure-theoretic
argument and \eqref{consist6}, we deduce that
   \begin{align} \label{cc-dt}
\int_{\sigma} t \D \mu_x(t) = \sum_{y\in\dzi{x}}
|\lambda_y|^2 \mu_y(\sigma), \quad \sigma \in
\borel{\rbb_+}, \quad x\in X.
   \end{align}
Set $P(x, \sigma) = \mu_x(\sigma)$ for $x\in X$
and $\sigma \in \borel{\rbb_+}$. It follows from \eqref{leba} and \eqref{cc-dt} that $P\colon X
\times \borel{\rbb_+} \to [0,1]$ is a ($\ascr$-measurable) family of probability
measures which fulfils the following
equality
   \begin{align} \label{esfcc}
\esf(P(\cdot, \sigma)) (z) = \frac{\int_{\sigma}
t P(\phi(z),\D t)}{\hsf(\phi(z))}, \quad z \in \phi^{-1}(\{x\}), \, x \in X_+.
   \end{align}
This implies that $P$ satisfies \eqref{cc}.
Hence, by Theorem \ref{general}, the weighted
composition operator $\cfw$ (see \eqref{wco-def})
is subnormal. Since $\slam=\cfw$, assertion (i)
is proved.

(ii) It is easily seen that if $\mu$ is a finite
positive Borel measure on $\rbb_+$ and
$\int_0^\infty s^{n} \D \mu(s) < \infty$ for some
$n \in \nbb$, then $\int_0^\infty s^{k} \D \mu(s)
< \infty$ for every $k \in \nbb$ such that $k \Le
n$. This fact combined with Lemma \ref{ddn} and
\cite[Lemmata 2.3.1(i) and 4.2.2(i)]{b-j-j-sA}
implies assertion (ii).
   \end{proof}
   \begin{rem}
Assume that $\slam$ is a densely defined weighted shift on a
countably infinite directed tree $\tcal=(V,E)$ with
weights $\lambdab=\{\lambda_v\}_{v \in V^{\circ}}$. A careful inspection of the proof of Theorem
\ref{wsi} reveals that if $\{\mu_x\}_{x \in X}$
(with $X=V$) is a system of Borel probability
measures on $\rbb_+$ which satisfies
\eqref{consist6}, then $\hsf > 0$ a.e.\
$[\nu_w]$, the family $P$ defined by $P(x, \cdot)
= \mu_x$ for $x\in X$ satisfies \eqref{cc} and
$\mu_x=\delta_0$ for every $x\in X \setminus
X_+$. We claim that if $\hsf > 0$ a.e.\
$[\nu_w]$ and $P\colon X \times \borel{\rbb_+}
\to [0,1]$ is any family of probability measures
which satisfies \eqref{cc}, then the system
$\{\tilde \mu_x\}_{x \in X}$ of probability
measures defined by
   \begin{align*}
\tilde \mu_x =
   \begin{cases}
P(x,\cdot) & \text{ if } x\in X_+,
   \\
\delta_0 & \text{ otherwise,}
   \end{cases}
   \end{align*}
satisfies \eqref{consist6} with $\{\tilde
\mu_x\}_{x\in X}$ in place of $\{\mu_x\}_{x\in
X}$. Indeed, \eqref{cc} implies \eqref{esfcc}. Hence, by \eqref{leba},
equality in \eqref{cc-dt} holds for every $x \in
X_+$ with $\mu_z=P(z,\cdot)$ for $z\in X$.
This implies via the standard measure-theoretic argument that equality in
\eqref{consist6} holds for every $u \in X_+$.
Since $\hsf > 0$ a.e.\ $[\nu_w]$, we deduce that
equality in \eqref{consist6} holds for every
$u\in X_+$ with $\{\tilde \mu_x\}_{x\in X}$ in
place of $\{\mu_x\}_{x\in X}$. Clearly, this is also the case for $u \in
X\setminus X_+$. Thus, our claim is proved.
   \end{rem}
   \section{Trees with one branching vertex}
Theorem \ref{wsi} will be applied in the case of
weighted shifts on leafless directed trees with
one branching vertex. First, we recall the models
of such trees (see Figure 1 below). For
$\eta,\kappa \in \zbb_+ \sqcup \{\infty\}$ with
$\eta \Ge 2$, we define the directed tree
$\tcal_{\eta,\kappa} = (V_{\eta,\kappa},
E_{\eta,\kappa})$ as follows (the symbol
``\,$\sqcup$\,'' denotes disjoint union of sets)
   \allowdisplaybreaks
   \begin{align*}
V_{\eta,\kappa} & = \big\{-k\colon k\in
J_\kappa\big\} \sqcup \{0\} \sqcup
\big\{(i,j)\colon i\in J_\eta,\, j\in \nbb\big\},
   \\
E_{\eta,\kappa} & = E_\kappa \sqcup
\big\{(0,(i,1))\colon i \in J_\eta\big\} \sqcup
\big\{((i,j),(i,j+1))\colon i\in J_\eta,\, j\in
\nbb\big\},
   \\
E_\kappa & = \big\{(-k,-k+1) \colon k\in
J_\kappa\big\},
   \end{align*}
where $J_n = \{k \in \nbb\colon k\Le
n\}$ for $n \in \zbb_+ \sqcup \{\infty\}$. Clearly, $\tcal_{\eta,\kappa}$ is
leafless and $0$ is its only branching vertex.
From now on, we write $\lambda_{i,j}$ instead of
the more formal expression $\lambda_{(i,j)}$
whenever $(i,j) \in V_{\eta,\kappa}$.

\vspace{3ex}

   \begin{tikzpicture}[scale = .6, transform shape]
   \tikzstyle{every node} = [circle,fill=gray!30]
   \node (e1kappa)[font=\footnotesize] at (-2,1)
   {$\lambda_{-l}$}; \node
   (e-1)[font=\footnotesize] at (0,1) {$
   \lambda_{-1}$}; \node (e10) at (3.5,1)
   {$\lambda_0$}; \node (e11) at (7,3)
   {$\lambda_{1,1}$}; \node (e12) at (10.5,3)
   {$\lambda_{1,2}$}; \node (e13) at (14,3)
   {$\lambda_{1,3}$}; \node[fill = none] (e1n) at
   (16,3) {};

   \node (e21) at (7,1) {$\lambda_{2,1}$}; \node
   (e22) at (10.5,1) {$\lambda_{2,2}$}; \node
   (e23) at (14,1) {$\lambda_{2,3}$}; \node[fill
   = none] (e2n) at (16,1) {};

   \node[fill = none] (e 5 1) at (7,-3) {};
   \node[fill = none] (e 5 n) at (16,-3) {};

   \node (e31) at (7,-1) {$\lambda_{3,1}$}; \node
   (e32) at (10.5,-1) {$\lambda_{3,2}$}; \node
   (e33) at (14,-1) {$\lambda_{3,3}$}; \node
   (e3n)[fill = none] at (16,-1) {};

   \draw[->] (e10) --(e11) node[pos=0.5,above =
   0pt,fill=none] {}; \draw[->] (e10) --(e21)
   node[pos=0.5, above = 0pt,sloped, fill=none]
   {}; \draw[dotted] (e10) --(e 5 1)
   node[pos=0.5,below = -10pt,sloped,fill=none]
   {}; \draw[->] (e10) --(e31) node[pos=0.5,
   below=0pt,fill=none] {};

   \draw[->] (e11) --(e12) node[pos=0.5,above =
   0pt,fill=none] {}; \draw[->] (e21) --(e22)
   node[pos=0.5,above = 0pt,fill=none] {};

   \draw[->] (e31) --(e32)
   node[pos=0.5,below=0pt,fill=none] {};

   \draw[->] (e12) --(e13) node[pos=0.5,above =
   0pt,fill=none] {}; \draw[->] (e22) --(e23)
   node[pos=0.5,above=0pt,fill=none] {};
   \draw[->] (e32) --(e33) node[pos=0.5,below =
   0pt,fill=none] {};; \draw[dotted] (e 5 1) --(e
   5 n) node[pos=0.5,above = -10pt,fill=none] {};

   %\draw[->] (e1-k) --(e-1) node[pos=0.5,above =
   %-5pt,fill=none] {};;
   \draw[ ->] (e-1) --(e10) node[pos=0.5,above =
   0pt,fill=none] {};; \draw[dashed, ->] (e13)
   --(e1n); \draw[dashed, ->] (e23) --(e2n);
   \draw[dashed, ->] (e33) --(e3n); \draw[dashed,
   ->] (e1kappa) -- (e-1);
   \end{tikzpicture}
   \begin{center}
Figure 1
   \end{center}

\vspace{1ex}

   \begin{thm}\label{glowne}
Let $\eta,\kappa \in \zbb_+ \sqcup \{\infty\}$ be
such that $\eta \Ge 2$ and let $\slam$ be a
weighted shift on a directed tree
$\tcal_{\eta,\kappa}$ with nonzero weights
$\lambdab = \{\lambda_v\}_{v \in
V_{\eta,\kappa}^{\circ}}$. Suppose that there
exists a sequence $\{\mu_i\}_{i=1}^\eta$ of Borel
probability measures on $\rbb_+$ such that
   \begin{align} \label{zgod0}
\int_0^\infty s^n \D \mu_i(s) =
\Big|\prod_{j=2}^{n+1}\lambda_{i,j}\Big|^2, \quad
n \in \nbb, \; i \in J_\eta,
   \end{align}
and that one of the following three disjunctive
conditions is satisfied\/{\em :}
   \begin{enumerate}
   \item[(i)]  $\kappa=0$ and
   \begin{align*}
\sum_{i=1}^\eta |\lambda_{i,1}|^2 \int_0^\infty
\frac 1 s\, \D \mu_i(s) \Le 1,
   \end{align*}
   \item[(ii)] $0 < \kappa < \infty$ and
   \begin{align} \label{zgod'}
\sum_{i=1}^\eta |\lambda_{i,1}|^2 \int_0^\infty
\frac 1 s\, \D \mu_i(s) &= 1,
   \\
\Big|\prod_{j=0}^{l-1} \lambda_{-j}\Big|^2
\sum_{i=1}^\eta|\lambda_{i,1}|^2 \int_0^\infty
\frac 1 {s^{l+1}} \D \mu_i(s) & = 1, \quad l \in
J_{\kappa-1}, \label{widly1}
   \\
\Big|\prod_{j=0}^{\kappa-1}
\lambda_{-j}\Big|^2\sum_{i=1}^\eta|\lambda_{i,1}|^2
\int_0^\infty \frac 1 {s^{\kappa+1}} \D \mu_i(s)
& \Le 1, \label{widly1'}
   \end{align}
   \item[(iii)] $\kappa=\infty$ and equalities \eqref{zgod'}
and \eqref{widly1} are valid.
   \end{enumerate}
Then the following two assertions hold\/{\em :}
\begin{enumerate}
\item[(a)] if $\slam$ is
densely defined, then $\slam$ is subnormal,
\item[(b)] if $n\in \nbb$, then $\slam^n$ is densely defined
if and only if
   \begin{align} \label{nd}
\sum_{i=1}^\eta |\lambda_{i,1}|^2 \int_0^\infty
s^{n-1} \, \D \mu_i(s) < \infty.
   \end{align}
\end{enumerate}
   \end{thm}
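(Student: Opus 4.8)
The plan is to deduce everything from Theorem \ref{wsi}, so the crux is to manufacture a full system $\{\mu_v\}_{v\in V_{\eta,\kappa}}$ of Borel probability measures on $\rbb_+$ together with nonnegative reals $\{\varepsilon_v\}$ satisfying the consistency condition \eqref{consist6}, with the given sequence $\{\mu_i\}_{i=1}^\eta$ playing the role of the measures at the first branch vertices $(i,1)$. Since $\tcal_{\eta,\kappa}$ is countably infinite, Theorem \ref{wsi} will apply once such a system is produced. Concretely, I would set $\mu_{(i,1)}=\mu_i$ and, for $j\Ge 2$, define $\mu_{(i,j)}$ by the density $\D\mu_{(i,j)}(t)=\big|\prod_{k=2}^{j}\lambda_{i,k}\big|^{-2}\,t^{\,j-1}\,\D\mu_i(t)$; hypothesis \eqref{zgod0} guarantees that each $\mu_{(i,j)}$ is a probability measure, and a one-line computation shows that the branch instance of \eqref{consist6} holds with $\varepsilon_{(i,j)}=0$. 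At the branching vertex I would put $\mu_0(\sigma)=\sum_{i=1}^{\eta}|\lambda_{i,1}|^2\int_\sigma \frac1t\,\D\mu_i(t)+\varepsilon_0\,\delta_0(\sigma)$ with $\varepsilon_0:=1-\sum_{i=1}^\eta|\lambda_{i,1}|^2\int_0^\infty\frac1s\,\D\mu_i(s)$, which is \eqref{consist6} at $u=0$ by construction.

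Next I would build the measures along the stem $\{-l\colon 1\Le l\Le\kappa\}$ by declaring $\mu_{-l}$ to have density $\big|\prod_{j=0}^{l-1}\lambda_{-j}\big|^2\,t^{-l}$ with respect to $\mu_0$ on $(0,\infty)$, together with an atom $\varepsilon_{-l}\,\delta_0$ calibrated to make the total mass equal $1$. The key arithmetic identity, obtained by inserting the definition of $\mu_0$, is $\mu_{-l}((0,\infty))=\big|\prod_{j=0}^{l-1}\lambda_{-j}\big|^2\sum_{i=1}^\eta|\lambda_{i,1}|^2\int_0^\infty\frac1{s^{l+1}}\,\D\mu_i(s)$, which is exactly the left-hand side appearing in \eqref{widly1} and \eqref{widly1'}. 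Consequently \eqref{zgod'} and \eqref{widly1} force $\varepsilon_0=0$ and $\varepsilon_{-l}=0$ for $1\Le l\Le\kappa-1$ (so these measures carry no mass at $0$), while \eqref{widly1'} yields $\varepsilon_{-\kappa}\Ge 0$; in case (i) there is no stem and the displayed inequality in (i) is precisely $\varepsilon_0\Ge 0$. Verifying the stem instance of \eqref{consist6} is then a telescoping density computation resting on $\lambda_{-l+1}\cdot\prod_{j=0}^{l-2}\lambda_{-j}=\prod_{j=0}^{l-1}\lambda_{-j}$.

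With the system in hand, assertion (a) is immediate from Theorem \ref{wsi}(i). For (b) I would invoke that the only branching vertex of $\tcal_{\eta,\kappa}$ is $0$, so by Theorem \ref{wsi}(ii) the power $\slam^n$ is densely defined if and only if $\int_0^\infty s^n\,\D\mu_0(s)<\infty$; since the atom at $0$ contributes nothing to a moment of order $n\Ge 1$, one computes $\int_0^\infty s^n\,\D\mu_0(s)=\sum_{i=1}^\eta|\lambda_{i,1}|^2\int_0^\infty s^{n-1}\,\D\mu_i(s)$, which is exactly the quantity in \eqref{nd}.

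I expect the main obstacle to be bookkeeping rather than a deep idea: one must be scrupulous about the behaviour of every measure at the point $0$ and about the conventions $\frac10=\infty$ and $0\cdot\infty=0$ when checking \eqref{consist6} at $\sigma=\{0\}$. In particular, the recursion at the atom closes only because the finiteness built into (i)--(iii) forces $\int_0^\infty\frac1s\,\D\mu_i(s)<\infty$ and hence $\mu_i(\{0\})=0$ (here nonvanishing of the weights is used), which then propagates to $\mu_0(\{0\})=\varepsilon_0$ and $\mu_{-l}(\{0\})=\varepsilon_{-l}$. Keeping track of the weight-product indexing $\prod_{j=0}^{l-1}\lambda_{-j}$ against the child weight $\lambda_{-l+1}$ is where an error is most likely to slip in.
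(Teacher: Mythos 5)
Your proposal is correct and takes essentially the same route as the paper: the paper also proves (a) by exhibiting a system $\{\mu_v\}_{v\in V_{\eta,\kappa}}$ of probability measures satisfying \eqref{consist6} (it cites the construction from the earlier paper \cite{b-j-j-sB} rather than writing it out, but that construction is exactly the one you describe) and then invokes Theorem \ref{wsi}(i). Your argument for (b) — that $V_\prec=\{0\}$, so Theorem \ref{wsi}(ii) reduces everything to $\int_0^\infty s^n\,\D\mu_0(s)<\infty$, together with the identity $\int_0^\infty s^n\,\D\mu_0(s)=\sum_{i=1}^\eta|\lambda_{i,1}|^2\int_0^\infty s^{n-1}\,\D\mu_i(s)$ — is precisely the paper's.
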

   \begin{proof}
As in the proof of \cite[Theorem 4.1]{b-j-j-sB},
we define the system $\{\mu_v\}_{v\in
V_{\eta,\kappa}}$ of Borel probability measures
on $\rbb_+$ and verify that $\{\mu_v\}_{v\in
V_{\eta,\kappa}}$ satisfies
\eqref{consist6}. Hence, assertion (a) is a direct consequence of Theorem \ref{wsi}(i).

(b) Fix $n\in\nbb$. It follows from Theorem \ref{wsi}(ii) that $\slam^n$ is densely defined if and only if
$\int_0^\infty s^n \, \D \mu_0(s) < \infty$. Using the explicit  definition of $\mu_0$ and applying the standard measure-theoretic argument, we see that
\begin{align*}
\int_0^\infty s^n \, \D \mu_0(s) = \sum_{i=1}^\eta |\lambda_{i,1}|^2 \int_0^\infty
s^{n-1} \, \D \mu_i(s).
\end{align*}
This completes the proof of
assertion (b) (the case of $n=1$ can also be settled without using the definition of $\mu_0$ simply by applying Lemma \ref{ddn} and \cite[Proposition 3.1.3(iii)]{j-j-s}).
    \end{proof}
Note that Theorem \ref{glowne} remains true if
its condition (ii) is replaced by the condition
(iii) of \cite[Theorem 4.1]{b-j-j-sB} (see also
\cite[Lemma 4.2]{b-j-j-sB} and its proof).
   \begin{cor} \label{wndd}
Under the assumptions of Theorem {\em
\ref{glowne}}, if $n\in \nbb$, then
the following two assertions are equivalent{\em
:}
   \begin{enumerate}
   \item[(i)] $\slam^n$ is densely defined and
$\slam^{n+1}$ is not,
   \item[(ii)] the condition \eqref{nd} holds and
$\sum_{i=1}^\eta |\lambda_{i,1}|^2 \int_0^\infty
s^n \, \D \mu_i(s) = \infty$.
   \end{enumerate}
   \end{cor}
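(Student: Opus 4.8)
The plan is to read the statement directly off Theorem \ref{glowne}(b), applied twice: once with the exponent $n$ and once with $n+1$. Since the corollary is stated under the hypotheses of Theorem \ref{glowne}, part (b) of that theorem is available for every positive integer, and in particular for both $n$ and $n+1$. So I would not expect any genuinely new input to be required beyond bookkeeping of the two density criteria.

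First I would record the two criteria. By Theorem \ref{glowne}(b), $\slam^n$ is densely defined if and only if \eqref{nd} holds. Replacing $n$ by $n+1$ in the same statement, $\slam^{n+1}$ is densely defined if and only if
\begin{align*}
\sum_{i=1}^\eta |\lambda_{i,1}|^2 \int_0^\infty s^n \, \D \mu_i(s) < \infty.
\end{align*}
Next I would negate the second criterion. As the integrands $s \mapsto s^n$ and the coefficients $|\lambda_{i,1}|^2$ are all nonnegative, the displayed sum is a (finite) sum of nonnegative quantities in $\rbop$, hence is either finite or equal to $\infty$. Consequently $\slam^{n+1}$ fails to be densely defined precisely when
\begin{align*}
\sum_{i=1}^\eta |\lambda_{i,1}|^2 \int_0^\infty s^n \, \D \mu_i(s) = \infty.
\end{align*}

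Finally I would conjoin the two equivalences. Assertion (i) states that $\slam^n$ is densely defined and $\slam^{n+1}$ is not; by the two criteria above this is the same as the conjunction of \eqref{nd} with the divergence of the last sum, which is exactly assertion (ii). This yields the equivalence of (i) and (ii). There is no real obstacle in this argument; the only step warranting a moment's care is the dichotomy ``finite or $\infty$'' for a sum of nonnegative terms in $\rbop$, which is what allows the passage from ``$\slam^{n+1}$ is not densely defined'' to the clean equality with $\infty$ appearing in (ii).
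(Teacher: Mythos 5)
Your proof is correct and is exactly the argument the paper intends: Corollary \ref{wndd} is stated without proof precisely because it follows from two applications of Theorem \ref{glowne}(b), with exponents $n$ and $n+1$, plus the finite-or-infinite dichotomy for sums of nonnegative terms. (Only trivial quibble: the sum need not be finite as $\eta$ may equal $\infty$, but the dichotomy you invoke holds equally for series of nonnegative extended-real terms, so nothing changes.)
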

   \section{The example}
It follows from \cite[Lemma 2.3.1(i)]{b-j-j-sA}
that if $\slam$ is a weighted shift on
$\tcal_{\eta,\kappa}$ and $\eta < \infty$, then
$\dzn{\slam}$ is dense in
$\ell^2(V_{\eta,\kappa})$ (this means that
Corollary \ref{wndd} is interesting only if
$\eta=\infty$). If $\eta=\infty$, the situation
is completely different. Using Theorem
\ref{glowne} and Corollary \ref{wndd}, we show
that for every $n\in \nbb$ and for every $\kappa
\in \zbb_+ \sqcup \{\infty\}$, there exists a
subnormal weighted shift $\slam$ on
$\tcal_{\infty,\kappa}$ such that $\slam^n$ is
densely defined and $\slam^{n+1}$ is not. For
this purpose, we adapt \cite[Procedure
6.3.1]{j-j-s} to the present context. In the
original procedure, one starts with a sequence
$\{\mu_i\}_{i=1}^\infty$ of Borel probability
measures on $\rbb_+$ (whose $n$th moments are
finite for every $n \in \zbb$ such that $n \Ge
-(\kappa +1)$) and then constructs a system of
nonzero weights $\lambdab=\{\lambda_v\}_{v \in
V_{\infty,\kappa}^{\circ}}$ that satisfies the
assumptions of Theorem \ref{glowne} (in fact,
using Lemma \ref{slon4} below, we can also
maintain the condition \eqref{nd}). However, in
general, it is not possible to maintain the
condition (ii) of Corollary \ref{wndd} even if
$\{\mu_i\}_{i=1}^\infty$ are measures with
two-point supports (this question is not
discussed here).
   \begin{exa} \label{glownyprz}
Assume that $\eta = \infty$. Consider the
measures $\mu_i = \delta_{q_i}$ with $q_i \in
(0,\infty)$ for $i \in \nbb$. By \cite[Notation
6.1.9 and Procedure 6.3.1]{j-j-s}, $\slam \in
\ogr{\ell^2(V_{\infty,\kappa})}$ if and only if
$\sup\big\{q_i\colon i\in\nbb \big\} < \infty$.
Hence, there is no loss of generality in assuming
that $\sup\big\{q_i\colon i\in\nbb \big\} =
\infty$. To cover all possible choices of $\kappa
\in \zbb_+ \sqcup \{\infty\}$, we look for a
system of nonzero weights $\{\lambda_v\}_{v \in
V_{\infty,\infty}}$ which satisfies
\eqref{zgod0}, \eqref{zgod'}, \eqref{widly1} with
$\kappa=\infty$, \eqref{nd} and the equality
$\sum_{i=1}^{\infty} |\lambda_{i,1}|^2
\int_0^\infty s^n \, \D \mu_i(s) = \infty$.
Setting $\lambda_{i,1} = \sqrt{\alpha_i q_i}$ for
$i\in\nbb$, we reduce our problem to finding a
sequence $\{\alpha_i\}_{i=1}^\infty \subseteq
(0,\infty)$ such that \allowdisplaybreaks
   \begin{align}  \label{slon2}
& \sum_{i=1}^\infty \alpha_i q_i^{l} < \infty, \quad l
\in \zbb \text{ and } l \Le n,
   \\  \label{slon3}
& \sum_{i=1}^\infty \alpha_i q_i^{n+1} = \infty.
   \end{align}
Indeed, if $\{\alpha_i\}_{i=1}^\infty$ is such a
sequence, then multiplying its terms by an appropriate
positive constant, we may assume that
$\{\alpha_i\}_{i=1}^\infty$ satisfies \eqref{slon2},
\eqref{slon3} and \eqref{zgod'}. Next we define the
weights $\big\{\lambda_{-j}\colon j\in \zbb_+\big\}$ recursively so as to satisfy
\eqref{widly1} with
$\kappa=\infty$, and finally we set
$\lambda_{i,j} = \sqrt{q_i}$ for all $i, j \in \nbb$
such that $j \Ge 2$. The so constructed weights
$\{\lambda_v\}_{v \in V_{\infty,\infty}}$
meets our requirements.

The following lemma turns out to be helpful when
solving the reduced problem.
   \begin{lem} \label{slon4}
If\/ $[a_{i,j}]_{i,j=1}^\infty$ is an infinite matrix
with entries $a_{i,j}\in \rbb_+$, then there exists a
sequence $\{\alpha_i\}_{i=1}^\infty \subseteq
(0,\infty)$ such that
   \begin{align*}
\sum_{i=1}^\infty \alpha_i a_{i,j} < \infty, \quad j
\in \nbb.
   \end{align*}
   \end{lem}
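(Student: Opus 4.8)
The plan is to construct $\{\alpha_i\}_{i=1}^\infty$ by a diagonal (triangular-truncation) argument, so that for each fixed column $j$ all but finitely many terms of $\sum_{i} \alpha_i a_{i,j}$ are dominated by one and the same summable geometric series.

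First I would control row $i$ only on the initial block of columns $j \Le i$. Concretely, for each $i \in \nbb$ set $M_i = \max\{a_{i,1},\ldots,a_{i,i}\}$, which is a finite maximum of finitely many nonnegative reals, and define $\alpha_i = \frac{1}{2^i(1+M_i)}$. Then clearly $\alpha_i \in (0,\infty)$ for every $i$.

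Next, fix $j \in \nbb$ and split the series $\sum_{i=1}^\infty \alpha_i a_{i,j}$ at the diagonal index $i=j$. For every $i \Ge j$ we have $j \Le i$, hence $a_{i,j} \Le M_i$, so that $\alpha_i a_{i,j} \Le \frac{M_i}{2^i(1+M_i)} \Le \frac{1}{2^i}$. Consequently the tail satisfies $\sum_{i=j}^\infty \alpha_i a_{i,j} \Le \sum_{i=j}^\infty 2^{-i} < \infty$, while the head $\sum_{i=1}^{j-1} \alpha_i a_{i,j}$ is a finite sum of nonnegative reals and hence finite. Adding the two pieces yields convergence of $\sum_{i=1}^\infty \alpha_i a_{i,j}$, which is exactly the required conclusion, valid for every $j \in \nbb$.

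The only real subtlety — and the reason the naive choice $\alpha_i = 2^{-i}$ fails — is that the entries in a single fixed column may grow without bound in $i$; the remedy is to let $\alpha_i$ absorb the bound $M_i$ of the \emph{entire} triangular block $\{a_{i,j}\colon j \Le i\}$ rather than of one entry. Since each fixed $j$ sits at or below the diagonal for all $i \Ge j$, this single choice of $\{\alpha_i\}$ simultaneously tames every column, leaving behind only the harmless finite head. I expect no genuine obstacle beyond isolating this triangular bound: once $M_i$ is introduced the estimate is entirely routine.
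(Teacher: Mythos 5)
Your proof is correct and follows essentially the same diagonal (triangular-truncation) argument as the paper: the paper chooses $\alpha_i$ so that $\alpha_i \sum_{k=1}^i a_{i,k} \Le 2^{-i}$, while you use the maximum $M_i$ of the same initial block, which is an equivalent device. Both proofs then split each column sum at $i=j$, bound the tail by a geometric series, and dismiss the head as a finite sum.
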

   \begin{proof}
First observe that for every $i \in \nbb$, there
exists $\alpha_i \in (0,\infty)$ such that $\alpha_i
\sum_{k=1}^i a_{i,k} \Le 2^{-i}$. Hence,
$\sum_{i=j}^\infty \alpha_i a_{i,j} \Le 1$ for every
$j\in \nbb$.
   \end{proof}
Since $\sup\big\{q_i\colon i\in\nbb \big\}=\infty$,
there exists a subsequence $\{q_{i_k}\}_{k=1}^\infty$
of the sequence $\{q_i\}_{i=1}^\infty$ such that
$q_{i_k} \Ge k$ for every $k\in \nbb$. Set $\varOmega
= \{i_k \colon k \in \nbb\}$. By Lemma \ref{slon4},
there exists $\{\alpha_i\}_{i\in \nbb \setminus
\varOmega} \subseteq (0,\infty)$ such that
   \begin{align} \label{slon5}
& \sum_{i \in \nbb \setminus \varOmega} \alpha_i
q_i^{l} < \infty, \quad l \in \zbb \text{ and } l \Le
n.
   \end{align}
Define the system $\{\alpha_i\}_{i\in \varOmega}
\subseteq (0,\infty)$ by
   \begin{align*}
\alpha_{i_k} = \frac{1}{k^{2}q_{i_k}^n}, \quad k \in
\nbb.
   \end{align*}
Since $q_{i_k} \Ge k$ for all $k\in \nbb$, we get
   \begin{align} \label{slon6}
\sum_{i \in \varOmega} \alpha_i q_i^l = \sum_{k =
1}^\infty \alpha_{i_k} q_{i_k}^l = \sum_{k = 1}^\infty
\frac{1}{k^2 q_{i_k}^{n-l}} \Le \sum_{k = 1}^\infty
\frac{1}{k^2} < \infty, \quad l \in \zbb \text{ and }
l \Le n,
   \end{align}
and
   \begin{align}  \label{slon7}
\sum_{i \in \varOmega} \alpha_i q_i^{n+1} = \sum_{k =
1}^\infty \alpha_{i_k} q_{i_k}^{n+1} = \sum_{k =
1}^\infty \frac{q_{i_k}}{k^2} \Ge \sum_{k = 1}^\infty
\frac{1}{k} = \infty.
   \end{align}
Combining \eqref{slon5}, \eqref{slon6} and
\eqref{slon7}, we get \eqref{slon2} and
\eqref{slon3}, which solves the reduced problem
and consequently gives the required example.
   \end{exa}
   \begin{rem} \label{lebaa}
It is worth mentioning that if $\kappa=\infty$, then
any weighted shift $\slam$ on $\tcal_{\infty,\infty}$
with nonzero weights is unitarily equivalent to an
injective composition operator in an $L^2$ space over
a $\sigma$-finite measure space (cf.\ \cite[Lemma
4.3.1]{j-j-s2}). This fact combined with Example
\ref{glownyprz} shows that for every $n\in \nbb$,
there exists a subnormal composition operator $C$ in
an $L^2$ space over a $\sigma$-finite measure space
such that $C^n$ is densely defined and $C^{n+1}$ is
not.
   \end{rem}
   

\begin{thebibliography}{10}
   \bibitem{a-g} N. I. Akhiezer,  I. M. Glazman,
{\em Theory of linear operators in Hilbert space},
Vol. II, Dover Publications, Inc., New York, 1993.
   \bibitem{Ash} R. B. Ash, {\em Probability and
measure theory}, Harcourt/Academic Press,
Burlington, 2000.
   \bibitem{b-s} M. Sh. Birman, M. Z. Solomjak,
{\it Spectral theory of selfadjoint operators in
Hilbert space}, D. Reidel Publishing Co.,
Dordrecht, 1987.
   \bibitem{Br} A. Brown, On a class of operators,
{\em Proc. Amer. Math. Soc.} {\bf 4} (1953),
723-728.
   \bibitem{b-j-j-sA} P.\ Budzy\'{n}ski,
Z.\ J.\ Jab{\l}o\'nski, I. B. Jung, J. Stochel,
Unbounded subnormal weighted shifts on directed
trees, {\em J. Math. Anal. Appl.} {\bf 394}
(2012), 819-834.
   \bibitem{b-j-j-sB} P.\ Budzy\'{n}ski,
Z.\ J.\ Jab{\l}o\'nski, I. B. Jung, J. Stochel,
Unbounded subnormal weighted shifts on directed
trees. II, {\em J. Math. Anal. Appl.} {\bf 398}
(2013), 600-608.
   \bibitem{b-j-j-sC} P.\ Budzy\'{n}ski,
Z.\ J.\ Jab{\l}o\'nski, I. B. Jung, J. Stochel, On
unbounded composition operators in $L^2$-spaces, {\em
Ann. Mat. Pura Appl.} doi:10.1007/s10231-012-0296-4.
   \bibitem{b-j-j-sS} P.\ Budzy\'{n}ski,
Z.\ J.\ Jab{\l}o\'nski, I. B. Jung, J. Stochel,
Unbounded subnormal composition operators in
$L^2$-spaces, submitted for publication.
   \bibitem{b-j-j-sW} P.\ Budzy\'{n}ski,
Z.\ J.\ Jab{\l}o\'nski, I. B. Jung, J. Stochel,
Unbounded weighted composition operators in
$L^2$-spaces, in preparation.
   \bibitem{cher} P. R. Chernoff, A semibounded closed
symmetric operator whose square has trivial domain,
{\em Proc. Amer. Math. Soc.} {\bf 89} (1983), 289-290.
   \bibitem{b-c} G. Biriuk, E. A. Coddington, Normal
extensions of unbounded formally normal
operators, {\em J. Math. Mech.} {\bf 12} (1964),
617-638.
   \bibitem{Con} J. B. Conway, {\em The theory of
subnormal operators}, Mathematical Surveys and
Monographs, Providence, Rhode Island, 1991.
   \bibitem{j-j-s} Z. J. Jab{\l}o\'nski,  I. B. Jung,
J. Stochel, Weighted shifts on directed trees,
{\em Mem. Amer. Math. Soc.} {\bf 216} (2012),
no.\ 1017, viii+107pp.
   \bibitem{j-j-s2} Z. J. Jab\l{o}\'nski, I. B. Jung,
J. Stochel, A non-hyponormal operator generating
Stieltjes moment sequences, {\em J. Funct. Anal.}
{\bf 262} (2012), 3946-3980.
   \bibitem{j-j-s3} Z. J. Jab{\l}o\'nski, I. B. Jung, J.
Stochel, Normal extensions escape from the class of
weighted shifts on directed trees, {\em Complex Anal.
Oper. Theory}, {\bf 7} (2013), 409-419.
   \bibitem{nai} M. Naimark, On the square of a closed
symmetric operator, {\em Dokl. Akad. Nauk SSSR}
{\bf 26} (1940), 866-870; ibid. {\bf 28} (1940),
207-208.
   \bibitem{Schm}K. Schm\"{u}dgen, On domains of powers of
closed symmetric operators, {\em J. Operator Theory}
{\bf 9} (1983), 53-75.
   \bibitem{Schm2} K. Schm\"{u}dgen, {\em Unbounded
self-adjoint operators on Hilbert space},
Graduate Texts in Mathematics, 265. Springer,
Dordrecht, 2012. xx+432 pp.
   \bibitem{StSz3} J. Stochel, F. H. Szafraniec, On
normal extensions of unbounded operators. I, {\em J.
Operator Theory} {\bf 14} (1985), 31-55.
    \bibitem{StSz1} J. Stochel and F. H. Szafraniec,
On normal extensions of unbounded operators, II, {\em
Acta Sci. Math. $($Szeged$)$} {\bf 53} (1989),
153-177.
   \bibitem{StSz4} J. Stochel, F. H. Szafraniec, On
normal extensions of unbounded operators. III.
Spectral properties, {\it Publ. RIMS, Kyoto Univ.}
{\bf 25} (1989), 105-139.
   \bibitem{StSz2} J. Stochel, F. H. Szafraniec, The
complex moment problem and subnormality: a polar
decomposition approach, {\em J. Funct. Anal.} 159
(1998), 432-491.
   \bibitem{Weid} J. Weidmann, {\em Linear operators
in Hilbert spaces}, Springer-Verlag, Berlin,
Heidelberg, New York, 1980.
   \end{thebibliography}
\end{document}